\DeclareMathOperator{\dom}{dom}
\DeclareMathOperator{\inter}{int}
\DeclareMathOperator*{\argmin}{arg\,min}
\newcommand{\eqdef}{:=}
\newcommand{\bP}{{{\mathbb P}}} 
\newcommand{\bE}{{{\mathbb E}}} 
\newcommand{\bN}{{{\mathbb N}}} 
\newcommand{\sA}{{\mathsf A}}
\newcommand{\sB}{{\mathsf B}}
\newcommand{\sC}{{\mathsf C}}
\newcommand{\sX}{{\mathsf X}}
\newcommand{\maxmon}{{\mathscr M}} 
\newcommand{\Selec}{{\mathfrak S}} 
\newcommand{\mcB}{{\mathscr B}}
\newcommand{\mcF}{{\mathscr F}} 
\newcommand{\mcG}{{\mathscr G}} 
\newcommand{\mcL}{{\mathscr L}}
\newcommand{\cD}{{{\mathcal D}}}
\newcommand{\cA}{{{\mathcal A}}}
\newcommand{\bR}{{{\mathbb R}}}
\newcommand{\ps}[1]{\langle #1 \rangle}
\theoremstyle{definition}
\newtheorem{theorem}{Theorem}
\newtheorem{lemma}[theorem]{Lemma}
\newtheorem{corollary}[theorem]{Corollary}
\newtheorem{proposition}[theorem]{Proposition}
\newtheorem{definition}{Definition}
\title{A Strong Law of Large Numbers \\ for Random Monotone Operators}
\author{Adil Salim\thanks{Microsoft Research, Redmond, WA, USA. \newline \texttt{adilsalim@microsoft.com}}}
\begin{document}
%
\maketitle
%


\begin{abstract} 
{\color{black} Random monotone operators are stochastic versions of maximal monotone operators which play an important role in stochastic nonsmooth optimization. Several stochastic nonsmooth optimization algorithms have been shown to converge to a zero of a mean operator defined as the expectation, in the sense of the Aumann integral, of a random monotone operator.

In this note, we prove a strong law of large numbers for random monotone operators where the limit is the mean operator. We apply this result to the empirical risk minimization problem appearing in machine learning. We show that if the empirical risk minimizers converge as the number of data points goes to infinity, then they converge to an expected risk minimizer.}
\end{abstract}

\section{Introduction}

Maximal monotone operators are set valued mappings which play an important role in various fields of convex analysis~\cite{bre-livre73,bau-com-livre11}, ranging from convex optimization to the analysis of Partial Differential Equations. {\color{black} Some recent developments in large scale convex optimization rely on stochastic versions of these maximal monotone operators.}

{\color{black} Indeed,} the set of maximal monotone operators $\maxmon(H)$ over a separable Hilbert space $H$ can be endowed with a topology~\cite[Chap. I]{att-79} (see also~\cite[Chap. III]{attouch1984variational}) such that $\maxmon(H)$ is a Polish space~\cite[Prop 1.1]{att-79}. Therefore one can study probability theory over the set $\maxmon(H)$~\cite[Chap. II]{att-79}. In particular, a random monotone operator is defined as a random variable with values in $\maxmon(H)$~\cite{bia-hac-16}. Random monotone operators were used to prove the convergence of the stochastic Forward Backward algorithm in~\cite{bia-hac-16, bia-hac-sal-(sub)jca17} where the expectation of a random monotone operator is defined through its Aumann integral~\cite{aum-65} (generalization of Lebesgue integral to set valued mappings). In this context, one may ask if random monotone operators admit a law of large numbers.

Various laws of large numbers for random sets have already been proven in the literature. Different class of random sets were considered (compact, unbounded...), see \textit{e.g.}~\cite{artstein1975strong,artstein1981law,Taylor1997,shapiro2007uniform,teran2008uniform,castaing2013law}. In particular, laws of large numbers for compact valued subdifferentials of random non convex functions were obtained in~\cite{shapiro2007uniform,teran2008uniform,castaing2013law}. The subdifferential of a convex function is a monotone operator, but the converse is false. In particular, the laws of large numbers mentioned above do not cover the case of general random monotone operators {\color{black} which are not subdifferentials}.

In this note, we prove a law of large numbers for random monotone operators, and apply it to the convergence of the empirical mean minimizers~\cite{bottou2008tradeoffs}. 

The next section provides some background knowledge on (random) monotone operators. Then, the main theorem is stated in section~\ref{sec:res}. Section~\ref{sec:proof} is devoted to the proof of the main result. The application to empirical risk minimization is provided in section~\ref{sec:erm}. Finally, we conclude in section~\ref{sec:ccl}. 
\subsection{Data availability statement}
Data sharing not applicable to this article as no datasets were generated or analysed during the current study.

\section{Background}

\label{sec:background}

In this section, we define maximal monotone operators, random monotone operators, and their expectation.

\subsection{Maximal monotone operators}
We review some basic material regarding maximal monotone operators. The proofs of these facts can be found in~\cite{bau-com-livre11}.

Let $H$ be a separable Hilbert space and let $I$ be the identity map over $H$. An operator $\sA$ over $H$ is a set valued mapping over $H$, \textit{i.e} a function from $H$ to the set of all subsets of $H$. An operator can be identified to its graph $G(\sA) \eqdef \{(x,y) \in H \times H, y \in \sA(x)\}$. The domain of $\sA$ is defined as $\dom(\sA) \eqdef \{x \in H, \sA(x) \neq \emptyset\}$. The inverse operator $\sA^{-1}$ is defined by $G(\sA^{-1}) = \{(y,x) \in H \times H, y \in \sA(x)\}$, the resolvent operator is defined by $J_{\sA} \eqdef (I+\sA)^{-1}$ and the set of zeros of $\sA$ is $Z(\sA) \eqdef \sA^{-1}(0)$. Note that $\ell \in Z(\sA)$ if and only if $\ell \in J_\sA(\ell)$. The operator $\sA$ is said monotone if the following condition holds: 
\begin{equation*}
\forall (x,y),(x',y') \in G(\sA), \ps{x-x',y-y'} \geq 0,
\end{equation*}
where $\ps{\cdot,\cdot}$ denotes the inner product of $H$. In this case, $J_{\sA}(x)$ is either the empty set or a singleton, \textit{i.e}, $J_{\sA}$ can be identified with a classical function $\dom(J_{\sA}) \to H$.

The monotone operator $\sA$ is said maximal, which we denote $\sA \in \maxmon(H)$, if $\dom(J_{\sA}) = H$. In this case, $J_{\sA} : H \to H$ is a $1$-Lipschitz continuous function. Minty's theorem states that the maximality of $\sA$ is equivalent to the maximality (for the inclusion ordering) of $G(\sA)$ in the set of all graphs of monotone operators over $H$~\cite{minty1962monotone}. 
Given $\gamma > 0$, the Yosida approximation of $\sA$ is the function defined by $\sA_\gamma(x) \eqdef \frac{x - J_{\gamma\sA}(x)}{\gamma}$. The function $\sA_\gamma$ is $1/\gamma$-Lipschitz continuous. 


Given two maximal monotone operators $\sA$ and $\sB$, the sum $\sA + \sB$ is defined by $(\sA + \sB)(x) \eqdef \sA(x) + \sB(x)$ where $\sA(x) + \sB(x)$ is the classical Minkowski sum of two sets. One can check that $\sA + \sB$ is a monotone operator, however, $\sA + \sB$ is not necessarily maximal~\cite[Page 54]{phelps2009convex}. {\color{black} Sufficient conditions for the maximality of $\sA + \sB$ include (i) the case where $\dom(\sB) = H$ (ii) the case where $\dom(\sA) \cap \inter(\dom(B)) \neq \emptyset$, where $\inter$ denotes the interior of a set.}

Consider the set $\Gamma_0(H)$ of convex lower semi-continuous and proper functions $F : H \to (-\infty,+\infty]$. Then, the subdifferential $\partial F$ of $F$ is a maximal monotone operator. In other words, $\maxmon_s(H) \eqdef \{\partial F, F \in \Gamma_0(H)\}$ is a subset of $\maxmon(H)$. {\color{black} Besides, $J_{\partial F}$ is the proximity operator of $F$ and $(\partial F)^{-1} = \partial F^{\ast}$ where $F^\ast$ is the Legendre-Fenchel transform of $F$. We denote by $\dom(F)$ the domain of $F$, \textit{i.e.}, $\dom(F) \eqdef \{x \in H, F(x) < \infty\}$. 

Finally,} let $C$ be a convex set and consider $F = \iota_C$ the convex indicator function of $C$, defined by $F(x) = 0$ if $x \in C$ and $F(x) = +\infty$ else. Then $F \in  \Gamma_0(H)$ and $\partial F$ is the normal cone $N_C$ to $C$. 

\subsection{Random monotone operators}
For every $x \in H$, consider the map $p_{x}$ from $\maxmon(H)$ to $H$ defined by $p_{x}(\sA) \eqdef J_{\sA}(x)$. The topology of R-convergence is the initial topology on $\maxmon(H)$ with respect to the family of functions $\{p_{x}, x \in H\}$. In other words, the R-topology is the coarsest topology on $\maxmon(H)$ that makes the functions $p_{x}$ continuous. Endowed with this topology, $\maxmon(H)$ is a Polish space~\cite[Lemme 2.1]{att-79} (metrizable, separable and complete). 

In the sequel, we consider a probability space $(\Xi, \mcG, \mu)$ such that $\mcG$ is $\sigma$-finite and $\mu$-complete, and a measurable map $A :(\Xi, \mcG, \mu) \to (\maxmon(H),\mcB(\maxmon(H)))$ (where $\mcB(\sX)$ denotes the Borelian sigma field over any topological space $\sX$). Such a measurable map is called a \textit{random monotone operator}. 

A normal convex integrand is a measurable map $f : (\Xi \times H, \mcG \otimes \mcB(H)) \to ((-\infty,+\infty] , \mcB((-\infty,+\infty]))$ such that for every $s \in \Xi$, $f(s,\cdot) \in \Gamma_0(H)$. Using~\cite[Theorem 2.3]{att-79}, $s \mapsto \partial f(s,\cdot)$ is a random monotone operator. 

\subsubsection{{\color{black}Mean operator}}
\label{sec:mean}
Let $\mcL^1(\Xi,\mcG,\mu)$ be the space of $\mcG$-measurable and $\mu$-integrable $H$-valued functions defined on $\Xi.$ For every $x \in H$, we define
\[
\Selec_x \eqdef 
\{ \varphi \in \mcL^1(\Xi,\mcG,\mu) \, : \, 
\varphi(s) \in A(s)(x) \ \text{for }\mu-\text{almost every (a.e.) } s \in \Xi \} \, .
\]
We shall prefer the notation $A(s,x)$ for the set $A(s)(x)$. Note that the set $\Selec_x$ might be empty. The mean operator $\cA$ of $A$ is defined by its Aumann integral~\cite{aum-65},
\[
\forall x \in H, \cA(x) \eqdef \left\{ \int \varphi d\mu \, : \, 
\varphi \in \Selec_x \right\}.
\]
We shall refer to $\cA$ as the expectation of $A$. 

One can check that $\cA$ is a monotone operator. Indeed, let $\varphi \in \Selec_x$ and $\varphi' \in \Selec_{x'}$. Then, $\mu$-a.e.,
\begin{equation*}
\ps{x-x',\varphi-\varphi'} \geq 0.
\end{equation*}
Therefore,
\begin{equation*}
\ps{x-x',\int \varphi d\mu - \int \varphi' d\mu} \geq 0,
\end{equation*}
which proves the monotonicity of $\cA$. However, the maximality of $\cA$ does not follow from the definition of $\cA$. 
\begin{definition}
\label{def}
The random monotone operator $A$ is said \textbf{integrable} if $\cA$ is a \textit{maximal} monotone operator.
\end{definition}
{\color{black}Various conditions can ensure the integrability of $A$. We can classify them depending on the nature of the measure $\mu$, discrete or not.

\begin{itemize}
\item If $\mu$ is a discrete measure, then $\cA$ can be written as a finite sum of maximal monotone operators: $\cA = \sum_{i = 1}^m B_i$. Therefore conditions for the maximality of $\cA$ can be obtained from the conditions for the maximality of a sum of maximal monotone operators. In particular, if $\cap_{i=1}^m \inter(\dom(\sB_i)) \neq \emptyset$, then $\cA$ is maximal using~\cite[Corollary 24.4]{bau-com-livre11}. 

Moreover, in the case where $A$ is a random subdifferential, $\cA$ can be written as a finite sum of subdifferentials: $\cA = \sum_{i = 1}^m \partial G_i$, where $G_i \in \Gamma_0(H)$. If the following interchange property holds: $\cA = \partial \sum_{i = 1}^m G_i$, then $\cA$ is maximal because $\cA$ is the subdifferential of $\sum_{i = 1}^m G_i \in \Gamma_0(H)$. The interchange property means that one can exchange the sum and the subdifferentiation $\partial$. General conditions under which the interchange property holds can be found in~\cite[Corollary 16.39]{bau-com-livre11}. In particular, if $\cap_{i=1}^m \inter(\dom(G_i)) \neq \emptyset$, then the interchange property holds using~\cite[Corollary 16.39]{bau-com-livre11} and $\cA$ is maximal. Finally, the interior of $\dom(G_i)$ can be replaced by the relative interior of $\dom(G_i)$ if $H$ is finite dimensional, see~\cite[Corollary 16.39]{bau-com-livre11}.

\item In the general case where $\mu$ is not necessarily discrete, general conditions ensuring the maximality of $\cA$ can be found in~\cite[Proposition 3.1]{bia-hac-16}. Another condition is domination: there exists a non-negative valued function $g \in L^1(\Xi,\mcG,\mu)$ such that for every $x \in H$, $A(s,x) \neq \emptyset$ and $\sup_{y \in A(s,x)} \|y\| \leq g(s)$ a.e., see~\cite[Example 2]{bia-16}. 

Moreover, in the subdifferential case where $A(s) = \partial f(s,\cdot)$ and $f$ is a normal convex integrand, $\cA$ can be written as the expectation w.r.t. $\mu$ of the subdifferentials $\partial f(s,\cdot)$. If the following interchange property holds: $\cA = \partial F$ where $F(x) = \bE_\xi(f(\xi,x))$, and if $F \in \Gamma_0(H)$, then $\cA$ is maximal, because $\cA$ is the subdifferential of $F \in \Gamma_0(H)$. The interchange property means that one can exchange the expectation $\bE$ and the subdifferentiation $\partial$. General conditions under which the interchange property holds can be found in~\cite{roc-wet-82}. In particular, if $\int |f(s,x)| d\mu(s) < \infty$ for every $x \in H$, we have both the interchange property and $F \in \Gamma_0(H)$, therefore $\cA$ is maximal. 
\end{itemize}}

\section{Main result}
\label{sec:res}

In this section we provide the main theorem and discuss our assumptions.

\begin{theorem}[Law of large numbers for random monotone operators]
\label{th}
Consider a family of i.i.d random variables $(\xi_n)_n$ from some probability space $(\Omega,\mcF,\bP)$ to $(\Xi,\mcG)$ with distribution $\mu$.
Assume that the random monotone operator $A$ is integrable and that for every $n \in \bN$, 
\begin{equation}
\label{eq:empmean}
\overline{A_n} \eqdef \frac{1}{n}\sum_{k = 1}^n A(\xi_k)
\end{equation}
is $\bP$-almost surely (a.s.) maximal. 

Then, $\overline{A_n} : (\Omega,\mcF,\bP) \to (\maxmon(H),\mcB(\maxmon(H)))$ is a random monotone operator and $\bP$-a.s, 
\begin{equation}
\label{eq:convergence}
    \overline{A_n} \underset{n \to \infty}{\longrightarrow} \cA,
\end{equation}
in the sense of R-convergence.
{\color{black}Moreover, if $A(s) = \partial f(s,\cdot)$ where $f$ is a normal convex integrand, then $\cA = \partial F$ where $F(x) = \bE_\xi(f(\xi,x))$.}

\end{theorem}
This theorem is a law of large numbers for the family of i.i.d random monotone operators $(A(\xi_n))_n$, where the limit is the expectation of $A$. 

Moreover, {\color{black} in the subdifferential case where $A(s) = \partial f(s,\cdot)$, Theorem~\ref{th} recovers a law of large numbers for subdifferentials of convex functions as a special case.}

{\color{black}
Let us now discuss the assumptions. Our first assumption is the integrability of $A$ (\textit{i.e} the maximality of $\cA$). Note that $A$ must be integrable for the convergence~\eqref{eq:convergence} to hold, since~\eqref{eq:convergence} is a convergence in the space of maximal monotone operators. Conditions under which $A$ is integrable are provided in Section~\ref{sec:mean}.
}

{\color{black}

Our second assumption is the a.s. maximality of $\overline{A_n}$. Note that $\overline{A_n}$ must be a.s. maximal for the convergence~\eqref{eq:convergence} to hold, since~\eqref{eq:convergence} is a convergence in the space of maximal monotone operators. We provide two sufficient conditions for the a.s. maximality of $\overline{A_n}$ in Proposition~\ref{prop1} and~\ref{prop2}.

\begin{proposition}
\label{prop1}
Denote $\cD$ the essential intersection of $\inter(\dom(A(s)))$, \textit{i.e.}, the set defined by $x \in \cD \Longleftrightarrow x \in \inter(\dom(A(s)))$ for $\mu$-a.e. $s$. If $\cD \neq \emptyset$, then for every $n$, $\overline{A_n}$ is a.s. maximal. 
\end{proposition}
\begin{proof}
Let $n \geq 1$ and $x \in \cD$. For every $k \in \{1, \ldots, n\}$, $\bP \left( x \in \inter(\dom(A(\xi_k))) \right) = 1$. Therefore, using independence, $\bP \left( x \in \cap_{k=1}^{n} \inter(\dom(A(\xi_k))) \right) = 1$. In particular, $\cap_{k=1}^{n} \inter(\dom(A(\xi_k))) \neq \emptyset$ a.s. Therefore, $\overline{A_n}$ is maximal a.s using~\cite[Corollary 24.4]{bau-com-livre11}. 
\end{proof}
\begin{proposition}
\label{prop2}
Assume that $A(s) = \partial f(s,\cdot)$ where $f$ is a normal convex integrand. Denote $\cD_s$ the essential intersection of $\inter(\dom(f(s,\cdot)))$, \textit{i.e.}, the set defined by $x \in \cD \Longleftrightarrow x \in \inter(\dom(f(s,\cdot)))$ for $\mu$-a.e. $s$. If $\cD_s \neq \emptyset$, then for every $n$, $\overline{A_n} = \partial \overline{f_n}$ where $\overline{f_n}(x) = \frac{1}{n}\sum_{k=1}^n f(\xi_k,x)$. In particular, $\overline{A_n}$ is a.s. maximal.

Finally, the interior of $\dom(f(s,\cdot))$ can be replaced by the relative interior of $\dom(f(s,\cdot))$ in the definition of $\cD_s$ if $H$ is finite dimensional.
\end{proposition}
\begin{proof}
Using $\cD_s \neq \emptyset$, we first obtain $\cap_{k=1}^{n} \inter(\dom(f(\xi_k,\cdot))) \neq \emptyset$ a.s. as in the previous proof. This ensures that the interchange property holds, \textit{i.e.}, $\overline{A_n} = \partial \overline{f_n}$. Since $\overline{f_n} \in \Gamma_0(H)$, $\overline{A_n}$ is maximal. Finally, if $H$ is finite dimensional, one can replace the the interior of $\dom(f(s,\cdot))$ by its relative interior in the definition of $\cD_s$ and this proposition is still valid with the same proof, see~\cite[Corollary 16.39]{bau-com-livre11}.
\end{proof}
}

{\color{black} Finally, we comment on the relationship between our assumptions.} The integrability of $A$ is not a consequence of the other assumptions. More generally, there is no logical relationship between the maximality of $\cA$ and the maximality of $\overline{A_n}$. To illustrate this, we shall use an example of two maximal monotone operators $\sB$ and $\sC$ provided in~\cite[Page 54]{phelps2009convex}, such that $\dom(\sB + \sC) = \{0\}$ but $\sB + \sC \neq N_{\{0\}}$ (and hence $\sB + \sC$ is not maximal). If $A$ is uniformly distributed over $\{\sB, \sC, N_{\{0\}}\}$, then, $\cA$ is maximal but with positive probability $\overline{A_2}$ is not maximal. If $A$ is uniform over $\{\sB, \sC\}$, then, with positive probability, $\overline{A_2}$ is maximal although $\cA$ is not maximal.

\section{Proof of the main result}
\label{sec:proof}
Since $\overline{A_2}$ is maximal, it is a random monotone operator using~\cite[Theorem 2.4]{att-79}. An alternative proof of the measurability of $\overline{A_2}$ is as follows: for every $y \in H$, $x = J_{\overline{A_2}}(y)$ is the solution to the monotone inclusion $0 \in (I - y)(x) + \frac12 A(\xi_1(\omega),x) + \frac12 A(\xi_2(\omega),x)$ for which the three operator splitting algorithm of~\cite{davis2017three} can be applied. This algorithm provides a sequence of iterates $(x_n(\omega))$ converging to $x$. One can show by induction that $\omega \mapsto x_n(\omega)$ is measurable. Therefore $J_{\overline{A_2}}(y)$ is also a random variable for every $y \in H$, which proves the measurability of $\overline{A_2}$~\cite[Lemma 2.1]{att-79}. Then, by induction, $\overline{A_n}$ is a random monotone operator for every $n$.

\begin{lemma}
\label{lem:zero}
Under the assumptions of Theorem~\ref{th}, if $x_\star \in Z(\cA)$ then, $$J_{\overline{A_n}}(x_\star) \longrightarrow x_\star,$$ as $n \to +\infty$, $\bP$-a.s.
\end{lemma}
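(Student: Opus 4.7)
The idea is to exhibit an explicit sequence in the graph of $\overline{A_n}$ at $x_\star$ that tends to $0$, and then use monotonicity to pull $J_{\overline{A_n}}(x_\star)$ back to $x_\star$. Since $x_\star \in Z(\cA)$ means $0 \in \cA(x_\star)$, the definition of the Aumann integral gives a selection $\varphi \in \Selec_{x_\star}$ with $\int \varphi \, d\mu = 0$. That is, $\varphi : \Xi \to H$ is $\mu$-integrable, $\varphi(s) \in A(s, x_\star)$ for $\mu$-a.e.\ $s$, and $\bE_\mu[\varphi] = 0$.

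Next, apply this selection along the i.i.d.\ sequence $(\xi_k)$. The variables $\varphi(\xi_k)$ are i.i.d., $H$-valued, integrable and centered, so by the strong law of large numbers in the separable Hilbert space $H$,
\begin{equation*}
y_n \eqdef \frac{1}{n} \sum_{k=1}^n \varphi(\xi_k) \underset{n \to \infty}{\longrightarrow} 0 \quad \bP\text{-a.s.}
\end{equation*}
Because $\varphi(\xi_k) \in A(\xi_k, x_\star)$ a.s., the Minkowski sum definition of $\overline{A_n}$ in~\eqref{eq:empmean} yields $y_n \in \overline{A_n}(x_\star)$ on an event of full probability; hence $(x_\star, y_n)$ lies in the graph of $\overline{A_n}$.

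Now exploit monotonicity. Set $x_n \eqdef J_{\overline{A_n}}(x_\star)$, which is well defined $\bP$-a.s.\ thanks to the assumed maximality of $\overline{A_n}$. By definition of the resolvent, $x_\star - x_n \in \overline{A_n}(x_n)$, so both $(x_n, x_\star - x_n)$ and $(x_\star, y_n)$ are in $G(\overline{A_n})$. Monotonicity gives
\begin{equation*}
\ps{x_n - x_\star, (x_\star - x_n) - y_n} \geq 0,
\end{equation*}
which rearranges to $\|x_n - x_\star\|^2 \leq \ps{x_\star - x_n, y_n} \leq \|x_n - x_\star\| \cdot \|y_n\|$, hence $\|x_n - x_\star\| \leq \|y_n\|$. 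Combining with $y_n \to 0$ a.s.\ finishes the proof.

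\textbf{Main obstacle.} The only non-routine step is producing the centered integrable selection $\varphi$: this hinges on the definition of the Aumann integral together with the hypothesis $0 \in \cA(x_\star)$, and on the fact that the almost-sure null set where $\varphi(s) \notin A(s, x_\star)$ does not affect the i.i.d.\ argument (the event $\{\varphi(\xi_k) \in A(\xi_k, x_\star) \text{ for all } k\}$ has probability one since $\bP(\xi_k \in N) = \mu(N) = 0$ for any $\mu$-null set $N$). Once the selection is in hand, the rest reduces to the classical Hilbert-space strong law and a one-line monotonicity estimate.
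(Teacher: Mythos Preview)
Your proof is correct and follows essentially the same route as the paper: extract an integrable centered selection $\varphi$ from $0\in\cA(x_\star)$, form the empirical mean $y_n=\frac1n\sum_{k=1}^n\varphi(\xi_k)\in\overline{A_n}(x_\star)$, bound $\|J_{\overline{A_n}}(x_\star)-x_\star\|\le\|y_n\|$, and conclude via the Hilbert-space SLLN. The only cosmetic difference is that the paper obtains the bound $\|J_{\overline{A_n}}(x_\star)-x_\star\|\le\|y_n\|$ by invoking the Yosida inequality~\eqref{eq:fonda} (which gives $\|(\overline{A_n})_1(x_\star)\|\le\|(\overline{A_n})_0(x_\star)\|\le\|y_n\|$), whereas you derive the same bound directly from monotonicity; the two arguments are equivalent.
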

\begin{proof}
Since $0 \in \cA(x_\star)$, there exists a measurable map $\varphi : (\Xi,\mcG,\mu) \to (H,\mcB(H))$ such that $\varphi$ is $\mu$-integrable, $\int \varphi d\mu = 0$ and $\varphi(s) \in A(s,x_\star)$ $\mu$-a.s.
Consider the random variables $\overline{\phi_n} = \frac{1}{n}\sum_{k=1}^n \varphi(\xi_k)$. Note that $\overline{\phi_n}$ is integrable, $\overline{\phi_n} \in \overline{A_n}(x_\star)$ $\bP$-a.s. and $\bE(\overline{\phi_n}) = 0$.

{\color{black} Let $\gamma >0$ and $x \in H$, then,
\begin{align*}
\|J_{\gamma \overline{A_n}}(x) - x_\star\|^2 =& \|x - x_\star\|^2 -2\gamma\ps{{\overline{A_n}}_\gamma(x), x - x_\star} + \gamma^2\|{\overline{A_n}}_\gamma(x)\|^2\\
=& \|x - x_\star\|^2 -2\gamma\ps{{\overline{A_n}}_\gamma(x), J_{\gamma \overline{A_n}}(x) - x_\star} - \gamma^2\|{\overline{A_n}}_\gamma(x)\|^2\\
=& \|x - x_\star\|^2 -2\gamma\ps{{\overline{A_n}}_\gamma(x)- \overline{\phi_n}, J_{\gamma \overline{A_n}}(x) - x_\star} - \gamma^2\|{\overline{A_n}}_\gamma(x)\|^2\\
& -2\gamma \ps{\overline{\phi_n},x- x_\star} +2\gamma \ps{\overline{\phi_n}, {\overline{A_n}}_\gamma(x)}\\
\leq& \|x - x_\star\|^2 -2\gamma \ps{\overline{\phi_n},x- x_\star} +\gamma^2 \|\overline{\phi_n}\|^2,
\end{align*}
where the last inequality comes from Young's inequality and monotonicity of $\overline{A_n}$.
Taking $\gamma = 1$ and $x = x_\star$ we get
\begin{equation*}
    \|J_{\overline{A_n}}(x_\star) - x_\star\| \leq \|\overline{\phi_n}\|, \quad \bP \text{-a.s.}
\end{equation*}}
Using the Strong Law of Large Numbers in Hilbert spaces (\cite[Corollary 7.10]{ledoux2013probability}) for $\overline{\phi_n}$ we have $\bP$-a.s., 
$$\|\overline{\phi_n}\| \longrightarrow_{n \to +\infty} 0.$$ and hence $\bP$-a.s.
$$\|J_{\overline{A_n}}(x_\star) - x_\star\| \longrightarrow_{n \to +\infty} 0,
$$
which concludes the proof.
\end{proof}
\begin{lemma}
\label{lem:mes}
Consider $z \in H$. Then, $A-z : x \mapsto A(x) - z$ is a random monotone operator and
\begin{equation}
\label{eq:A-z}
    J_{A-z}(y) = J_{A}(y+z), \quad \forall y \in H.
\end{equation}
\end{lemma}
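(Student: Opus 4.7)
The plan is to break the claim into three short steps: (i) check that $A(s)-z$ is a maximal monotone operator for each $s \in \Xi$, (ii) establish the resolvent identity \eqref{eq:A-z} by a direct computation from the definition $J = (I+\cdot)^{-1}$, and (iii) deduce the measurability of $s \mapsto A(s)-z$ by invoking the characterization of the Borel $\sigma$-field of $\maxmon(H)$ via the resolvent maps $p_{y}$, $y \in H$.

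For step (i), a constant translation of a maximal monotone operator is maximal monotone: monotonicity is immediate because $\langle x-x', (y-z)-(y'-z)\rangle = \langle x-x', y-y'\rangle$, and maximality follows from the Minty characterization, since $\dom(J_{A(s)-z})$ equals $H$ as soon as $\dom(J_{A(s)})=H$ (this will drop out of step (ii) anyway).

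For step (ii), let $y \in H$ and set $x \eqdef J_{A(s)}(y+z)$. By definition of the resolvent, this means $y+z \in x + A(s)(x)$, equivalently $y \in x + A(s)(x) - z = x + (A(s)-z)(x)$, which says exactly that $x \in J_{A(s)-z}(y)$. Because $A(s)-z$ is monotone, its resolvent is single valued wherever defined, so \eqref{eq:A-z} holds and, in particular, $J_{A(s)-z}$ is defined on all of $H$.

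For step (iii), recall that the topology of R-convergence is the initial topology on $\maxmon(H)$ generated by $\{p_{y} : y \in H\}$ with $p_{y}(\sB) = J_{\sB}(y)$, hence $\mcB(\maxmon(H))$ is generated by these same maps. To show that $s \mapsto A(s)-z$ is $\mcG$-measurable, it therefore suffices to check that $s \mapsto J_{A(s)-z}(y)$ is measurable for every $y \in H$. By \eqref{eq:A-z}, this map equals $s \mapsto J_{A(s)}(y+z)$, which is measurable because $A$ is a random monotone operator. I do not anticipate a serious obstacle here: the argument is essentially a shift of variable combined with the definition of the R-topology, and the only point that requires a little care is using the initial-topology description of $\mcB(\maxmon(H))$ to reduce measurability of the operator to measurability of its resolvents.
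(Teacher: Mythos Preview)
Your proposal is correct and follows essentially the same route as the paper: prove the resolvent identity \eqref{eq:A-z} by the one-line equivalence $y \in x + (A-z)(x) \Leftrightarrow y+z \in x + A(x)$, read off maximality of $A-z$ from the fact that its resolvent is everywhere defined, and conclude measurability from the measurability of $s \mapsto J_{A(s)}(y+z)$. The only cosmetic differences are that the paper skips your step (i) (it gets maximality directly from the resolvent identity, as you yourself note it ``drops out of step (ii)''), and that for step (iii) the paper simply cites \cite[Lemme~2.1]{att-79} rather than arguing from the initial-topology description of $\mcB(\maxmon(H))$; citing that lemma spares you the ``little care'' you flag about whether the Borel $\sigma$-field of an initial topology is generated by the defining maps.
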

\begin{proof}
Equation~\eqref{eq:A-z} is well known and can be found for example in~\cite{bau-com-livre11}. We provide a full proof for the sake of completeness.
For any $y \in H$, the inclusion $y \in x + (A-z)(x)$ (where $x$ is the unknown) is equivalent to $y + z \in x + A(x)$ and hence admits a unique solution $x = J_{A}(y+z)$. This implies that $A-z$ is $\mu$-a.s a maximal monotone operator, and $J_{A-z}(y) = J_{A}(y+z)$. 

We also see that $s \mapsto J_{A(s)-z}(y)$ is measurable for every $y \in H$ and hence, $A-z$ is a random monotone operator (see \cite[Lemme 2.1]{att-79}).
\end{proof}
\subsection{{\color{black} End of the proof of Theorem~\ref{th}}}
We now prove Theorem~\ref{th}. 
Consider $x \in H$. Since $\dom(J_{\cA}) = H$, $x \in \dom(J_{\cA})$. Therefore, there exists a unique $(y,z) \in G(\cA)$ such that $x = y + z$. Therefore, $0 \in \cA(y) - z$,  \textit{i.e}, $y \in Z(\cA - z)$. Using Lemma~\ref{lem:mes} and the maximality of $\overline{A_n}$, $A-z$ is a random monotone operator and $\frac{1}{n}\sum_{k=1}^n (A(\xi_k) - z) = \overline{A_n} - z$ is $\mu$-a.s. maximal. Moreover, $A-z$ is $\mu$-integrable with $\int (A-z) d\mu  = \cA - z$. Applying Lemma~\ref{lem:zero} to the random monotone operator $A-z$, we have $\bP$-a.s, 
\begin{equation}
\label{eq:cv}
J_{\overline{A_n} - z}(y) \longrightarrow y.
\end{equation}
Using $y = J_{\cA - z}(y)$, $x = y+z$ and Lemma~\ref{lem:mes}, the convergence~\eqref{eq:cv} can be rewritten as follows: for every $x \in H$, there exists a probability one event $\Omega_x \subset \Omega$ such that for every $\omega \in \Omega_x$,
$$J_{\overline{A_n}(\omega)}(x) \longrightarrow J_{\cA}(x).$$
We now show that $\Omega_x$ can be taken independent of $x$. Consider a dense countable subset $D$ of $H$, and the probability one event $\tilde{\Omega} = \bigcap_{x \in D} \Omega_x$. For every $\omega \in \tilde{\Omega},$ we have for every $x \in D$,
$$J_{\overline{A_n}(\omega)}(x) \longrightarrow J_{\cA}(x).$$
Consider $x_0 \in H$. We shall prove that for every $\omega \in \tilde{\Omega},$ we also have 
$$J_{\overline{A_n}(\omega)}(x_0) \longrightarrow J_{\cA}(x_0).$$
Let $\varepsilon >0$ and $x \in D$ such that $\|x-x_0\| < \varepsilon /3$. There exists $n_0 \in \bN$ such that for every $n \geq n_0$, $\|J_{\overline{A_n}(\omega)}(x) - J_{\cA}(x)\| < \varepsilon /3$. Let us decompose 
\begin{align*}
&\|J_{\overline{A_n}(\omega)}(x_0) - J_{\cA}(x_0)\| \\
\leq& \|J_{\overline{A_n}(\omega)}(x) - J_{\cA}(x)\| + \|J_{\overline{A_n}(\omega)}(x_0) - J_{\overline{A_n}(\omega)}(x)\| + \|J_{\cA}(x_0) - J_{\cA}(x)\|.
\end{align*}
Since resolvents are $1$-Lipschitz continuous, $\|J_{\overline{A_n}(\omega)}(x_0) - J_{\cA}(x_0)\| < \varepsilon$ for every $n \geq n_0$. We proved that for every $\omega \in \tilde{\Omega}$, $J_{\overline{A_n}(\omega)}(x) \longrightarrow J_{\cA}(x),$ for every $x \in H$, \textit{i.e.}, $\overline{A_n}(\omega) \longrightarrow \cA$, {\color{black}by definition of the R-convergence.} 

{\color{black}
In the case where $A(s) = \partial f(s,\cdot)$, we can show that $\cA = \partial F$. To this end, we start by showing that for every $x \in H$, 
\begin{equation}
\label{eq:incl}
\cA(x) \subset \partial F(x).
\end{equation}
Consider $x \in H$. If $\cA(x) = \emptyset$, the statement is trivial. Else, let $g \in \cA(x)$. There exists $\varphi \in \Selec_x$ such that $\int \varphi d\mu = g$. In particular, $\varphi(s) \in A(s,x) = \partial f(s,x)$ for a.e. $s$. Using the definition of the subdifferential, for every $y \in H$,
\begin{equation}
f(s,x) + \ps{\varphi(s), y - x} \leq f(s,y),
\end{equation}
for a.e. $s$. Integrating w.r.t. $\mu$ and using $\int \varphi d\mu = g$,
\begin{equation}
F(x) + \ps{g, y - x} \leq F(y).
\end{equation}
Therefore, $g \in \partial F(x)$, which proves that $\cA(x) \subset \partial F(x)$. In particular, $G(\cA) \subset G(\partial F)$, where $G$ denotes the graph. Using the convexity of $f(s,\cdot)$, one can prove that $F$ is convex. Therefore $\partial F$ is monotone, but not necessarily maximal \textit{a priori}. However, $G(\cA) \subset G(\partial F)$, and $\cA$ is maximal by assumption. Therefore, $G(\cA) = G(\partial F)$ which is equivalent to $\cA = \partial F$.}

{\color{black}
\section{Application to empirical risk minimization}
\label{sec:erm}

We now provide a consequence of the law of large numbers for random monotone operators. More precisely, we characterize $Z(\overline{A_n})$ as a subset of $Z(\cA)$ as $n \to \infty$. 

A random variable $\ell$ is an a.s. cluster point of the sequence $(x_n)$ of random variables if there exists a probability one event $\tilde{\Omega}$ such that for every $\omega \in \tilde{\Omega}$ there exists a subsequence of $x_n(\omega)$ converging to $\ell(\omega)$. The subsequence of $x_n(\omega)$ is called a random subsequence of $x_n$.

\begin{corollary}
\label{coro}
Let $(x_n)$ be a sequence of $H$-valued random variables such that $x_n \in Z(\overline{A_n})$ a.s. {\color{black} Assume that $\overline{A_n}$ is a.s. maximal and that $A$ is integrable with expectation $\cA$.}
Then, every a.s. cluster point $\ell$ of $(x_n)$ is a.s. a zero of $\cA$.
\end{corollary}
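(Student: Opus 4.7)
The plan is to exploit the characterization $\ell \in Z(\cA) \iff J_{\cA}(\ell) = \ell$, together with the R-convergence $\overline{A_n} \to \cA$ already established in Theorem~\ref{th} and the fact that $x_n \in Z(\overline{A_n})$ means $J_{\overline{A_n}}(x_n) = x_n$ almost surely. So essentially I want to pass to the limit inside the resolvent equation $J_{\overline{A_n}}(x_n) = x_n$ along a cluster subsequence.

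First I would fix the full-probability event: let $\tilde\Omega_1$ be the probability-one event on which $\overline{A_n} \to \cA$ in R-convergence (given by Theorem~\ref{th}), let $\tilde\Omega_2$ be the probability-one event on which $x_n \in Z(\overline{A_n})$ for all $n$, and let $\tilde\Omega_3$ be the probability-one event on which $(x_n)$ admits a subsequence converging to $\ell$. On $\tilde\Omega = \tilde\Omega_1 \cap \tilde\Omega_2 \cap \tilde\Omega_3$ I can work pathwise. Pick $\omega \in \tilde\Omega$ and a subsequence $x_{n_k}(\omega) \to \ell(\omega)$.

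The key step is the decomposition, using the $1$-Lipschitz continuity of the resolvent $J_{\overline{A_{n_k}}(\omega)}$ and the identity $J_{\overline{A_{n_k}}(\omega)}(x_{n_k}(\omega)) = x_{n_k}(\omega)$:
\begin{align*}
\|x_{n_k}(\omega) - J_{\cA}(\ell(\omega))\|
&= \|J_{\overline{A_{n_k}}(\omega)}(x_{n_k}(\omega)) - J_{\cA}(\ell(\omega))\| \\
&\leq \|J_{\overline{A_{n_k}}(\omega)}(x_{n_k}(\omega)) - J_{\overline{A_{n_k}}(\omega)}(\ell(\omega))\|
+ \|J_{\overline{A_{n_k}}(\omega)}(\ell(\omega)) - J_{\cA}(\ell(\omega))\| \\
&\leq \|x_{n_k}(\omega) - \ell(\omega)\| + \|J_{\overline{A_{n_k}}(\omega)}(\ell(\omega)) - J_{\cA}(\ell(\omega))\|.
\end{align*}
The first term tends to $0$ because $x_{n_k}(\omega) \to \ell(\omega)$; the second tends to $0$ by R-convergence applied at the fixed point $\ell(\omega) \in H$ (which is precisely the pointwise convergence of resolvents, valid on $\tilde\Omega_1$). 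Hence $x_{n_k}(\omega) \to J_{\cA}(\ell(\omega))$. Combined with $x_{n_k}(\omega) \to \ell(\omega)$, this yields $\ell(\omega) = J_{\cA}(\ell(\omega))$, i.e.\ $\ell(\omega) \in Z(\cA)$, on the probability-one event $\tilde\Omega$.

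I do not anticipate a real obstacle: the only subtlety is making sure the R-convergence is evaluated at the \emph{fixed} limit point $\ell(\omega)$ rather than at the varying $x_{n_k}(\omega)$, which is why the $1$-Lipschitz bound on the resolvent is inserted. Measurability of $\ell$ is assumed by the definition of a cluster point, so no additional measurability argument is needed.
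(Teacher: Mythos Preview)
Your proof is correct and follows essentially the same route as the paper's: both combine the pointwise resolvent convergence at the fixed limit point $\ell(\omega)$ (from Theorem~\ref{th}) with a $1$-Lipschitz bound to absorb the error $\|x_{n_k}-\ell\|$. The paper phrases the computation via the Yosida approximation $\overline{A_{n,1}} = I - J_{\overline{A_n}}$ and bounds $\|\ell - J_{\cA}(\ell)\|$ directly, whereas you bound $\|x_{n_k} - J_{\cA}(\ell)\|$ and then invoke uniqueness of limits, but the underlying inequality is identical.
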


\begin{proof}
Consider a random subsequence of $(x_n)$ converging a.s. to $\ell$. This random subsequence is still denoted $(x_n)$. 
Denote $\overline{A_{n,\gamma}}$ the Yosida approximation of $\overline{A_n}$ and set $\gamma=1$. For every $n \geq 0$, $\overline{A_{n,\gamma}}(x_n) = 0$. Therefore,
\begin{align*}
    \|\ell - J_{\cA}(\ell)\| = \|\cA_\gamma(\ell)\| &\leq \|\cA_\gamma(\ell) - \overline{A_{n,\gamma}}(\ell)\| + \|\overline{A_{n,\gamma}}(\ell) - \overline{A_{n,\gamma}}(x_n)\|\\ &\leq \|J_{\cA}(\ell) - J_{\overline{A_{n}}}(\ell)\| + \|\ell - x_n\|.
\end{align*}
Since $\overline{A_n}$ and $\cA$ are maximal, we can use the law of large numbers (Theorem~\ref{th}): letting $n \to +\infty$ we obtain $\ell = J_{\cA}(\ell)$ a.s.
\end{proof}

The existence of cluster points is usually established independently using compactness arguments~\cite{dal2012introduction}.

\subsection{Unregularized empirical risk minimization}

Many machine learning and signal processing problems require to solve the so-called \textit{expected risk minimization} problem
\begin{equation}
\label{eq:trm}
\min_{x \in H} F(x) := \bE_\xi(f(\xi,x))
\end{equation}
where $H = \bR^d$, $\xi$ a random variable, $f$ is a normal convex integrand such that $f(\xi,x)$ is integrable. 

In these contexts, $\xi$ represents some random data with unknown distribution and hence evaluating $F$ is prohibitive. In practice, a number $n$ of i.i.d realizations $(\xi_k)$ of the data $\xi$ is given and the expected risk minimization is approximated by the \textit{empirical risk minimization} problem 
\begin{equation}
    \label{eq:erm}
\min_{x \in H} \overline{f_n}(x) := \frac{1}{n}\sum_{k = 1}^n f(\xi_k,x),
\end{equation} 
where $\xi_k$ are i.i.d copies of $\xi$. The empirical risk minimization is usually performed using some optimization algorithm. The output of the optimization algorithm is typically a minimizer $x_n$ of $\overline{f_n}$. A first consequence of Corollary~\ref{coro} is a characterization of the a.s. cluster points of $(x_n)$ as minimizers of $F$.

\begin{corollary}
\label{coro2}
Let $(x_n)$ be a sequence of $H$-valued random variables such that $x_n \in \argmin \overline{f_n}$ a.s. {\color{black} Assume that $f(\xi,x)$ is integrable for every $x \in H$.}
Then, every a.s. cluster point $\ell$ of $(x_n)$ is a.s. a minimizer of $F$.
\end{corollary}
\begin{proof}
We apply Corollary~\ref{coro} to the random monotone operator $A(s) \eqdef \partial f(s,\cdot)$. Recall that $\overline{A_n}(x) = \frac{1}{n}\sum_{k = 1}^n \partial f(\xi_k,x)$. We make several uses of~\cite[Page 179]{roc-wet-82} which implies that if $\bE |f(\xi,x)| < \infty$ for every $x \in H$, then the interchange property holds. 
\begin{itemize}
\item First, we show that $A$ is integrable. Using~\cite[Page 179]{roc-wet-82} and $\bE |f(\xi,x)| < \infty$ for every $x \in H$, the interchange property holds: 
\begin{equation}
\label{eq:interchange-exp}
\cA(x) = \partial F(x).
\end{equation}
Since $F \in \Gamma_0(H)$, $\cA$ is maximal, therefore, $A$ is integrable.
\item Then, we show that $\overline{A_n}$ is maximal. We view averaging as taking expectation w.r.t. an empirical distribution. Using~\cite[Page 179]{roc-wet-82} and $\frac{1}{n} \sum_{k=1}^n |f(\xi_k,x)| < \infty$ a.s. (which follows from $\bE |f(\xi,x)| < \infty$) for every $x \in H$, the interchange property holds a.s.: 
\begin{equation}
\label{eq:interchange-emp}
\overline{A_n}(x) = \partial \overline{f_n}(x).
\end{equation}
Since $\overline{f_n} \in \Gamma_0(H)$, $\overline{A_n}$ is a.s. maximal.
\item Finally, we show that $x_n \in Z(\overline{A_n})$ a.s. We know that $x_n \in \argmin \overline{f_n}$, which implies that $x_n \in Z(\partial \overline{f_n})$. Using~\eqref{eq:interchange-emp}, $x_n \in Z(\overline{A_n})$ a.s.   
\end{itemize}
Using Corollary~\ref{coro}, every a.s. cluster point of $(x_n)$ is a.s. a zero of $\cA$. Using~\eqref{eq:interchange-exp}, a zero of $\cA$ is a minimizer of $F$, which concludes the proof.
\end{proof}

The last corollary characterizes the cluster points of the minimizers of the empirical problem~\eqref{eq:erm} as minimizers of the expected problem~\eqref{eq:trm}. This result seems natural since $\overline{f_n}(x)$ converges to $F(x)$ a.s. But the proof of this corollary relies on the law of large numbers for some random monotone operators which are subdifferentials.

Other methods to prove Corollary~\ref{coro2} include Epi-convergence techniques~\cite{attouch1984variational,aze1988convergence,dal2012introduction}. Indeed, showing Epi-convergence of $\overline{f_n}$ to $F$, \textit{i.e.}, showing that for every $x \in H$,
\begin{align}
F(x) \leq& \liminf \overline{f_n}(x_n)& \text{for every sequence } (x_n) \text{ converging to } x \nonumber\\
F(x) \geq& \limsup \overline{f_n}(x_n)& \text{for at least one sequence } (x_n) \text{ converging to } x, \label{eq:epi}
\end{align}
would lead to the same conclusion as Corollary~\ref{coro2}.

Epi-convergence techniques are more general than ours since they cover the convergence of arbitrary sequences of functions satisfying~\eqref{eq:epi}. However, these techniques seem to be less suitable than ours for our specific case of sequences of empirical averages of convex functions. Indeed, we proved Corollary~\ref{coro2} assuming convexity and integrability of $f$ only.

Although the assumption that $f(\xi,x)$ is integrable holds in practice, this assumption is not necessary for Corollary~\ref{coro2} to hold. This assumption is only used to ensure that the interchange properties~\eqref{eq:interchange-exp} and~\eqref{eq:interchange-emp} hold.

\subsection{Empirical risk minimization with structured regularization}

Several machine learning and signal processing problems require to solve a \textit{regularized expected risk minimization} problem
\begin{equation}
\label{eq:trm-reg}
\min_{x \in H} \bE_\xi(f(\xi,x)) + R(Lx), \qquad F(x) := \bE_\xi(f(\xi,x)),
\end{equation}
where $H = \bR^d$, $\xi$ a random variable, $f$ is a normal convex integrand such that $f(\xi,x)$ is integrable, $K = \bR^p$, $R \in \Gamma_0(K)$ and $L$ is a $p \times d$ real matrix, \textit{i.e.}, a linear operator $H \to K$.
In these contexts, $\xi$ represents some random data with unknown distribution and hence evaluating $F$ is prohibitive. Moreover, $R(Lx)$ represents a structured regularization encoding constraints or sparsity for example. In practice, a number $n$ of i.i.d realizations $(\xi_k)$ of the data $\xi$ is given and the expected risk minimization is approximated by the \textit{regularized empirical risk minimization} problem 
\begin{equation}
    \label{eq:erm-reg}
\min_{x \in H} \frac{1}{n}\sum_{k = 1}^n f(\xi_k,x) + R(Lx), \qquad \overline{f_n}(x) := \frac{1}{n}\sum_{k = 1}^n f(\xi_k,x).
\end{equation} 

When $R \equiv 0$, the regularized empirical risk minimization boils down to \eqref{eq:erm} and is usually performed using some optimization algorithm. In this case, Corollary~\ref{coro2} can be applied to the convergence as $n \to \infty$ of the output of the optimization algorithm. 

When $R \not\equiv 0$ and $L = I$ the identity matrix, the regularized empirical risk minimization can also be performed using some optimization algorithm such as the proximal stochastic gradient algorithm~\cite{atc-for-mou-14,gorbunov2020unified}. The latter algorithm relies on the evaluation of the proximity operator of $R$, a.k.a. $J_{\partial R}$, which can be computed in closed form in many cases\footnote{\url{www.proximity-operator.net}}. Corollary~\ref{coro2} can easily be adapted to this case ($R \not\equiv 0$ and $L = I$) to study the convergence as $n \to \infty$ of the output of the optimization algorithm.

However, when $R \not\equiv 0$ and $L \neq I$, the proximity operator of $R(Lx)$ is usually hard to compute. In this case, primal--dual optimization methods~\cite{condat2019proximal} allow to solve~\eqref{eq:erm-reg} without computing the proximity operator of $R(Lx)$ explicitly: they rely on the proximity operator of $R$ and matrix vector multiplications involving $L$ only. They are therefore widely used for solving Problem~\eqref{eq:erm-reg}.

The output of a primal--dual optimization algorithm is a primal--dual optimal point $(x_n,y_n) \in H \times K$, \textit{i.e.}, a solution to:
\begin{equation}
\label{eq:kkt}
\left\{
    \begin{array}{ll}
        0 \in \partial \overline{f_n}(x_n) + L^T y_n \\
        0 \in - L x_n + \partial R^\ast(y_n)
    \end{array}
\right.,
\end{equation}
see~\cite{condat2019proximal}. In particular, $x_n$ is a minimizer of~\eqref{eq:erm-reg}, see~\cite{condat2019proximal}.

A second consequence of Corollary~\ref{coro} is a characterization of the a.s. cluster points of $(x_n,y_n)$ as primal--dual optimal points for Problem~\eqref{eq:trm-reg}:
\begin{equation}
\label{eq:kkt2}
\left\{
    \begin{array}{ll}
        0 \in \partial F(\ell) + L^T m \\
        0 \in - L \ell + \partial R^\ast(m).
    \end{array}
\right.
\end{equation}
A key element of the proof is that $(x_n,y_n)$ (resp. $(\ell,m)$) can be seen as a zero of a monotone operator $\overline{A_n}$ (resp. $\cA$). Besides, neither $\overline{A_n}$ nor $\cA$ are subdifferentials.

\begin{corollary}
\label{coro3}
Let $(x_n, y_n)$ be a sequence of $H \times K$-valued random variables such that $(x_n, y_n)$ is primal--dual optimal for~\eqref{eq:erm-reg}, \textit{i.e.}, satisfies~\eqref{eq:kkt} a.s. {\color{black} Assume that $f(\xi,x)$ is integrable for every $x \in H$.}
Then, every a.s. cluster point $(\ell,m)$ of $(x_n, y_n)$ is a.s. primal--dual optimal for~\eqref{eq:trm-reg}, \textit{i.e.}, satisfies~\eqref{eq:kkt2} a.s. In particular, $\ell$ is a minimizer of~\eqref{eq:trm-reg}.
\end{corollary}
\begin{proof}
We apply Corollary~\ref{coro} to the operator
\begin{equation*}
A(s)(x,y) \eqdef \begin{bmatrix} \partial f(s,x) + L^T y \\ - L x + \partial R^\ast(y) \end{bmatrix},
\end{equation*}
using vector notations.
\begin{itemize}
\item First, we show that $A$ is a random monotone operator over $H \times K$. $A$ can be decomposed as
\begin{equation}
\label{eq:decompo}
A(s)(x,y) = \begin{bmatrix} \partial f(s,x) \\ \partial R^\ast(y) \end{bmatrix} + \begin{bmatrix} L^T y \\ - L x \end{bmatrix},
\end{equation}
where the first term is the subdifferential of normal convex integrand $(s,(x,y)) \mapsto f(s,x) + R^\ast(y)$, hence a random monotone operator, and the second term is the deterministic  skew symmetric operator whose matrix is given by
\begin{equation*}
S \eqdef \begin{bmatrix} 0 & L^T \\ - L & 0\end{bmatrix}.
\end{equation*}
One can check that $\ps{Sz,z} = 0$ for any $z \in H \times K$ using skew symmetry. Therefore $S$ is monotone. Moreover $(I + S)$ is regular. Indeed, if $(I+S)z = 0$ then $\ps{z,(I+S)z} = \|z\|^2 = 0$ which implies $z = 0$. Therefore, the resolvent $(I+S)^{-1}$ is well defined everywhere, which implies the maximality of $S$. Finally, $S$ is maximal monotone and deterministic, hence a random monotone operator. 

Both terms in~\eqref{eq:decompo} are random monotone operators. Their sum is also a.s. maximal because $S$ has a full domain. Finally, $A$ is a random monotone operator using the same reasoning as in the beginning of Section~\ref{sec:proof}. Note that $A(s)$ is not a subdifferential, because a skew symmetric operator is not a subdifferential.
\item Then, we show that $A$ is integrable. As in the proof of Corollary~\ref{coro2}, the interchange property holds for $\partial F$. Therefore the expectation of $A$ can be written
\begin{equation}
\label{eq:interchange-exp-2}
\cA(x,y) = \begin{bmatrix} \partial F(x) + L^T y \\ - L x + \partial R^\ast(y) \end{bmatrix},
\end{equation}
which is maximal monotone using the same decomposition as~\eqref{eq:decompo}. Therefore, $A$ is integrable.
\item Next, we show that $\overline{A_n}$ is maximal. As in the proof of Corollary~\ref{coro2}, the interchange property holds for $\partial \overline{f_n}$, \textit{i.e.}, $\overline{\partial f_n} = \partial \overline{f_n}$. Therefore $\overline{A_n}$ can be written
\begin{equation}
\label{eq:interchange-emp-2}
\overline{A_n}(x,y) = \begin{bmatrix} \partial \overline{f_n}(x) + L^T y \\ - L x + \partial R^\ast(y) \end{bmatrix},
\end{equation}
which is a.s. maximal monotone using the same decomposition as~\eqref{eq:decompo}.
\item Finally, we show that $(x_n, y_n) \in Z(\overline{A_n})$ a.s. We know that $(x_n,y_n)$ satisfies~\eqref{eq:kkt}. Using~\eqref{eq:interchange-emp-2}, $(x_n,y_n) \in Z(\overline{A_n})$ a.s. 
\end{itemize}
Using Corollary~\ref{coro}, every a.s. cluster point of $(x_n)$ is a.s. a zero of $\cA$. Using~\eqref{eq:interchange-exp-2}, a zero of $\cA$ satisfies~\eqref{eq:kkt2}, which concludes the proof.
\end{proof}

The last corollary characterizes the cluster points of the primal--dual optimal points of the empirical problem~\eqref{eq:erm-reg} as primal--dual optimal points of the expected problem~\eqref{eq:trm-reg}. The proof of this corollary relies on the law of large numbers for random monotone operators which are not subdifferentials.

Since primal--dual optimal points are also saddle points of a Lagrangian function (see~\cite{condat2019proximal}), Corollary~\ref{coro3} could be obtained using Epi-convergence techniques~\cite{attouch1984variational,aze1988convergence,dal2012introduction}. But these techniques are more generic and therefore less suitable for our specific problem.

Finally, Corollary~\ref{coro3} can easily be extended to handle a random matrix $L(\xi)$, \textit{i.e.}, to the problem 
\begin{equation}
\label{eq:sto-lin-cons}
\min_{x \in H} \bE_\xi(f(\xi,x)) + R(\bE_\xi(L(\xi))x), 
\end{equation}
where $L(\xi)$ is a random matrix. Problem~\eqref{eq:sto-lin-cons} is quite general and covers stochastic linear constraints for example. Indeed, by taking $R = \iota_{\{b\}}$ where $b \in K$, Problem~\eqref{eq:sto-lin-cons} boils down to
\begin{equation}
\min_{x \in H} \bE_\xi(f(\xi,x)), \quad \text{s.t.} \quad \bE(L(\xi))x = b. 
\end{equation}

In conclusion, the law of large numbers for random monotone operators provides a versatile framework for studying the convergence of solutions of empirical problems appearing in machine learning and signal processing. 
}

\section{Conclusion}
\label{sec:ccl}

We proved a law of large numbers for random monotone operators. This work opens the door to the study of random monotone operators as random elements. An interesting question is whether their exists an universal distribution for random monotone operators, as the Gaussian distribution for real random variables, or other probabilistic objects, see \textit{e.g}~\cite{le2013uniqueness,ledoux2013probability,marvcenko1967distribution}.

\bibliographystyle{plain} 
 \newcommand{\noop}[1]{} \def\cprime{$'$} \def\cdprime{$''$} \def\cprime{$'$}

\end{document}